\pgfplotsset{compat=newest}
\algorithmic\endcsname{\itemsep\z@}{\itemsep=3pt}{}{}
\newtheorem{proposition}{Proposition}
\theoremstyle{definition}
\author{Riley Badenbroek \and Etienne de Klerk}
\title{An Analytic Center Cutting Plane Method to Determine Complete Positivity of a Matrix}
\DeclareMathOperator{\diff}{d\!}
\DeclareMathOperator{\interior}{int}
\DeclareMathOperator{\tr}{tr}
\DeclareMathOperator{\dom}{dom}
\DeclareMathOperator{\val}{val}
\DeclareMathOperator{\Diag}{Diag}
\let\vec\relax
\DeclareMathOperator{\vec}{vec}
\DeclareMathOperator{\mat}{mat}
\algnewcommand\Input{\item[\bf Input:] }
\algnewcommand\Output{\item[\bf Output:] }
\newcommand{\conA}{A}
\newcommand{\cona}{a}
\newcommand{\conb}{b}
\newcommand{\conr}{r}
\newcommand{\slackr}{d}
\newcommand{\slackA}{s}
\newcommand{\mulr}{\kappa}
\newcommand{\mulA}{\lambda}
\begin{document}

\maketitle

\begin{abstract}
	We propose an analytic center cutting plane method to determine if a matrix is completely positive, and return a cut that separates it from the completely positive cone if not. 
	This was stated as an open (computational) problem by Berman, D\"ur, and Shaked-Monderer [Electronic Journal of Linear Algebra, 2015].
	Our method optimizes over the intersection of a ball and the copositive cone, where membership is determined by solving a mixed-integer linear program suggested by Xia, Vera, and Zuluaga [INFORMS Journal on Computing, 2018]. Thus, our algorithm can, more generally, be used to solve any copositive optimization problem, provided one knows the radius of a ball containing an optimal solution. 
	Numerical experiments show that the number of oracle calls (matrix copositivity checks) for our implementation scales well with the matrix size, growing roughly like $O(d^2)$ for $d\times d$ matrices. 
	The method is implemented in Julia, and available at \url{https://github.com/rileybadenbroek/CopositiveAnalyticCenter.jl}.
\end{abstract}

Keywords: copositive optimization, analytic center cutting plane method, completely positive matrices

AMS subject classification: 90C25, 90C51, 49M05, 65K05

\section{Introduction}
\label{sec:Introduction}
We define the completely positive cone $\mathcal{CP}^d \subset \mathbb{S}^d$ as
\begin{equation*}
	\mathcal{CP}^d \coloneqq \{ BB^\top: B \geq 0, B \in \mathbb{R}^{d \times k} \text{ for some $k$} \},
\end{equation*}
where $\mathbb{S}^d$ denotes the space of real symmetric $d \times d$ matrices.
Completely positive matrices play an important role in optimization. For instance, by a theorem of Motzkin and Straus \cite{motzkin1965maxima} (see also De Klerk and Pasechnik \cite{deklerk2002approximation}), the stability number of a graph can be formulated as an optimization problem with linear objective and linear constraints over the completely positive cone (or its dual cone). A seminal result by Burer \cite{burer2009copositive} shows that -- under mild assumptions -- binary quadratic problems can also be reformulated as optimization problems over the completely positive cone. Other applications build on the work by Kemperman and Skibinsky \cite{kemperman1992covariance}, who found that
\begin{equation*}
	\left\{ \int xx^\top \diff \mu(x) : \text{$\mu$ is a finite-valued nonnegative measure supported on $\mathbb{R}^d_+$} \right\} = \mathcal{CP}^d.
\end{equation*}
This equality has spawned a large number of applications in distributionally robust optimization, e.g. Natarajan et al. \cite{natarajan2011mixed} and Kong et al. \cite{kong2013scheduling} (see Li et al. \cite{li2014distributionally} for a survey).

One advantage of these reformulations is that they transform hard problems into linear optimization problems over a proper cone, which allow them to benefit from the (duality) theory of convex optimization. The difficulty in such problems is essentially moved to the conic constraint. It is therefore unsurprising that even testing whether a matrix is completely positive is NP-hard, cf. Dickinson and Gijben \cite{dickinson2014computational}. Several approaches to this testing problem exist in the literature.

Jarre and Schmallowsky \cite{jarre2009computation} propose an augmented primal-dual method that provides a certificate if $C \in \mathcal{CP}^d$ by solving a sequence of second-order cone problems. However, their algorithm converges slowly if $C$ is on the boundary of $\mathcal{CP}^d$, and the regularization they propose to solve this is computationally expensive.

An obvious way to verify that $C$ is completely positive is to find a factorization $C = BB^\top$ where $B \geq 0$. Several authors have done this for specific matrix structures, see Dickinson and D\"ur \cite{dickinson2012linear}, Bomze \cite{bomze2018building}, and the references therein. For general matrices, factorization methods have been proposed by Nie \cite{nie2014Atruncated}, and Sponsel and D\"ur \cite{sponsel2014factorization}, but these methods do not perform well on bigger matrices. Groetzner and D\"ur \cite{groetzner2018factorization} develop an alternating projection scheme that does scale well, but is not guaranteed to find a factorization for a given completely positive matrix. The method struggles in particular for matrices near the boundary of the completely positive cone.
Another heuristic method based on projection is given by Elser \cite{elser2017matrix}. Sikiri{\'c} et al. \cite{sikiric2020simplex} can find a rational factorization whenever it exists, although the running time is hard to predict.

To actually optimize over the completely positive cone is even harder. Bomze et al. \cite{bomze2011quadratic} suggest a factorization heuristic with promising numerical performance.
A more naive approach to solving completely positive optimization problems is to replace the cone $\mathcal{CP}^d$ with a tractable outer approximation, such as the cone of doubly nonnegative matrices (i.e. the symmetric positive semidefinite matrices with nonnegative elements). If the problem over this outer approximation has an optimal solution $C$, one would not only like to check if $C \in \mathcal{CP}^d$, but also to generate a cut that separates $C$ from $\mathcal{CP}^d$ if $C \notin \mathcal{CP}^d$. After adding the cut to the relaxation, the relaxation may be re-solved, hopefully yielding a better solution (this scheme is mentioned in e.g. Sponsel and D\"ur \cite{sponsel2014factorization} and Berman, D\"ur, and Shaked-Monderer \cite{berman2015open}).

Burer and Dong \cite{burer2013separation} proposed a method to generate such a cut for $5 \times 5$ matrices. Sponsel and D\"ur \cite{sponsel2014factorization} suggested an algorithm based on simplicial partition. 
Nevertheless, finding a cutting plane for the completely positive matrices is still listed as an open problem by Berman, D\"ur, and Shaked-Monderer \cite{berman2015open}.

Our approach will optimize over the dual cone of $\mathcal{CP}^d$ (with respect to the trace inner product $\langle \cdot, \cdot \rangle$), which is known as the copositive cone.
This cone is defined as
\begin{equation*}
	\mathcal{COP}^d \coloneqq \{ X \in \mathbb{S}^d : y^\top X y \geq 0 \text{ for all $y \in \mathbb{R}^d_+$} \}.
\end{equation*}
It is well known that $C \in \mathcal{CP}^d$ if and only if $\langle C, X \rangle \geq 0$ for all $X \in \mathcal{COP}^d$. Hence, minimizing $\langle C, X \rangle$ over $X \in \mathcal{COP}^d$ should give us an answer to the question if $C \in \mathcal{CP}^d$ or not, and if not, we immediately have an $X \in \mathcal{COP}^d$ that induces a valid cut.

It should be noted that determining if a matrix $X$ lies in $\mathcal{COP}^d$ is co-NP-complete, see Murty and Kabadi \cite{murty1987some}. The classical copositivity test is due to Gaddum \cite{gaddum1958linear}, but his procedure requires performing a test for all principal minors of a matrix, which does not scale well to larger $d$. Nie et al. \cite{nie2018complete} have proposed an algorithm based on semidefinite programming that terminates in finite time, although the actual computation time is hard to predict. Anstreicher \cite{anstreicher2020testing} shows that copositivity can be tested by solving a mixed-integer linear program (MILP), building on work by Dickinson \cite{dickinson2019new}. See Hiriart-Urruty and Seeger \cite{hiriart2010variational} for a review of the properties of copositive matrices.

Our chosen method of testing if a matrix $X$ is copositive is the same as in Badenbroek and De Klerk \cite{badenbroek2019simulated}, which is similar to Anstreicher's. Our method also solves an MILP, and also admits a $y \geq 0$ such that $y^\top X y < 0$ if $X$ is not copositive. The main difference is that our method derives from Xia et al. \cite{xia2018globally} instead of Dickinson \cite{dickinson2019new}.

Since the copositive cone is intractable, it will have to be replaced by an approximation if we want to optimize over it.
Bundfuss and D\"ur \cite{bundfuss2008algorithmic,bundfuss2009adaptive} use polyhedral inner and outer approximations based on simplicial partitions that are refined in regions interesting to the optimization.
Hierarchies of inner approximations of the copositive cone are proposed by Parrilo \cite{parrilo2000structured}, De Klerk and Pasechnik \cite{deklerk2002approximation} (see also Bomze and De Klerk \cite{bomze2002solving}) and Pe\~na et al. \cite{pena2007computing}. Y{\i}ld{\i}r{\i}m \cite{yildirim2012accuracy} proposes polyhedral outer approximations of the copositive cone, and analyzes the gap to the inner approximations by De Klerk and Pasechnik. Finally, Lasserre \cite{lasserre2014new} proposes a spectrahedral hierarchy of outer approximations of $\mathcal{COP}^d$.

Our approach to optimize over the copositive cone is to use an analytic center cutting plane method. Therefore, it is convenient to use a simple polyhedral outer approximation of the copositive cone: $\{ X \in \mathbb{S}^d : y^\top X y \geq 0 \,\forall y \in \mathcal{Y} \}$, where $\mathcal{Y} \subset \mathbb{R}^d_+$ is a finite set of vectors. These vectors will be generated by performing the copositivity check for some matrix $X$, and if it turns out there exists a $y \geq 0$ such that $y^\top X y < 0$, this $y$ is added to $\mathcal{Y}$.

Analytic center cutting plane methods were first introduced by Goffin and Vial \cite{goffin1993computation} (see \cite{goffin2002convex} for a survey by the same authors, or Boyd et al. \cite{boyd2011accm}).
The advantage of analytic center cutting plane methods is that the number of iterations scales reasonably with the problem dimension. For instance, Goffin et al. \cite{goffin1996complexity} find that the number of iterations is $O^*(n^2/\epsilon^2)$, where $n$ is the number of variables, $\epsilon$ is the desired accuracy, and $O^*$ ignores polylogarithmic terms. 
In every iteration of our algorithm, the main computational effort is solving an MILP whose size does not change throughout the algorithm's run.

We describe our method in detail Section \ref{sec:ACCPM} and conduct numerical experiments in Section \ref{sec:NumericalExperiments}.

\subsection*{Notation}
Throughout this work, we use the Euclidean inner product 
on $\mathbb{R}^n$, and the trace inner product $\langle \cdot, \cdot \rangle$ on $\mathbb{S}^d$.
For some vector $x \in \mathbb{R}^n$ with all elements unequal to $0$, and some integer $i \in \mathbb{Z}$, let $x^i \coloneqq \begin{bmatrix} x_1^i & \cdots & x_n^i \end{bmatrix}^\top$.

Since $\mathbb{S}^d$ is isomorphic to $\mathbb{R}^{d(d+1)/2}$, we can also consider our optimization over $\mathcal{COP}^d$ as an optimization over $\mathbb{R}^{d(d+1)/2}$. To do that, we follow the convention from Julia's \texttt{MathOptInterface} package, which (implicitly) uses the following vectorization operator on $X = [X_{ij}] \in \mathbb{S}^d$:
\begin{equation*}
	\vec(X) \coloneqq \begin{bmatrix} X_{11} & X_{12} & X_{22} & X_{13} & X_{23} & \cdots & X_{dd} \end{bmatrix}^\top,
\end{equation*}
i.e. $\vec(X)$ contains the upper triangular part of the matrix.
Let $\mat: \mathbb{R}^{d(d+1)/2} \to \mathbb{S}^d$ be the inverse of $\vec$, and let $\mat^*: \mathbb{S}^d \to \mathbb{R}^{d(d+1)/2}$ be the adjoint of $\mat$.

The problem we will look at for some fixed $C \in \mathbb{S}^d$ is
\begin{equation}
	\label{eq:CompletelyPositiveProblem}
	\min_X \{ \langle C, X \rangle : \| \vec(X) \|^2 \leq 1, X \in \mathcal{COP}^d \}.
\end{equation}
If $X \in \mathcal{COP}^d$ is an optimal solution to \eqref{eq:CompletelyPositiveProblem}, then $C \in \mathcal{CP}^d$ if and only if $\langle C, X \rangle \geq 0$.

\section{An Analytic Center Cutting Plane Method}
\label{sec:ACCPM}
Analytic center cutting plane methods can be used to solve optimization problems of the form
\begin{equation}
	\label{eq:ACCPProblem}
	\inf_x \{ c^\top x : x \in \mathcal{X} \subseteq \mathbb{R}^n \},
\end{equation}
where $c \in \mathbb{R}^n$ and $\mathcal{X}$ is a nonempty, bounded, convex set for which we know a separation oracle. In other words, given some point $x \in \mathbb{R}^n$, one should be able to determine if $x \in \mathcal{X}$ or not, and moreover, if $x \notin \mathcal{X}$, we must be able to generate a halfspace $\mathcal{H}$ such that $\mathcal{X} \subseteq \mathcal{H}$ but $x \notin \mathcal{H}$.

The idea behind analytic center cutting plane methods is to maintain a tractable outer approximation of the optimal set of \eqref{eq:ACCPProblem}. Then, in every iteration $k$, one approximates the analytic center $x_k$ of this set. One of two things will happen:
\begin{itemize}
	\item $x_k$ does not lie in $\mathcal{X}$. In this case, use a separating hyperplane to remove $x_k$ from the outer approximation of the feasible set.
	\item $x_k$ lies in $\mathcal{X}$. Since $x_k$ is feasible for \eqref{eq:ACCPProblem}, any optimal solution must have an objective value that is at least as good as $\langle c, x_k \rangle$. Any optimal solution will therefore lie in the halfspace $\{x \in \mathbb{R}^n: \langle c, x \rangle \leq \langle c, x_k \rangle \}$, and one may thus restrict the outer approximation to this halfspace.
\end{itemize}

The constraint $\| x \|^2 \leq 1$ from \eqref{eq:CompletelyPositiveProblem} will be included in our outer approximation explicitly. Hence, there are three remaining questions we have to answer before we can solve \eqref{eq:CompletelyPositiveProblem}:
\begin{enumerate}
	\item How will we generate separating hyperplanes for the constraint $\mat(x) \in \mathcal{COP}^d$? 
	\item How do we compute the analytic center of the outer approximation?
	\item How can one prune constraints that do not have a large influence on the location of the analytic center?
\end{enumerate}
These three questions will be answered in Sections \ref{subsec:GeneratingCuts}, \ref{subsec:AnalyticCenter}, and \ref{subsec:Pruning}, respectively. Then, we state our algorithm in Section \ref{subsec:Algorithm} and make some remarks concerning its complexity in Section \ref{subsec:Complexity}.

\subsection{Generating Cuts}
\label{subsec:GeneratingCuts}
The first question we will answer is how to generate separating hyperplanes for the copositive cone. Note that $X \in \mathbb{S}^d$ is copositive if and only if
\begin{equation}
	\label{eq:CopositiveMembership}
	\min_y \{ y^\top X y : e^\top y = 1, y \geq 0 \},
\end{equation}
where $e$ is the all-ones vector, is nonnegative. It was shown by Xia, Vera, and Zuluaga \cite{xia2018globally} that the value of \eqref{eq:CopositiveMembership} is equal to the optimal value of the following mixed-integer linear program:
\begin{align}
	\left.
	\begin{aligned}
		\min_{y, z, \mu, \nu} \, & - \mu\\
		\text{subject to }\, & X y + \mu e - \nu = 0\\
		& e^\top y = 1\\
		& 0 \leq y_i \leq z_i && \forall i = 1, ..., d\\
		& 0 \leq \nu_i \leq 2 d (1 - z_i) \max_{k,l} |X_{kl}|  && \forall i = 1, ..., d\\
		& z_i \in \{0,1\} && \forall i = 1, ..., d,
	\end{aligned}
	\label{eq:CopositiveMembershipMILP}
	\right\}
\end{align}
and that any optimal $y$ from \eqref{eq:CopositiveMembershipMILP} is also an optimal solution for \eqref{eq:CopositiveMembership}.
If the optimal value of \eqref{eq:CopositiveMembershipMILP} is nonnegative, then $X$ is copositive. If the optimal value of \eqref{eq:CopositiveMembershipMILP} is negative, then an optimal solution $y \geq 0$ from \eqref{eq:CopositiveMembershipMILP} admits the halfspace $\mathcal{H} = \{ X' \in \mathbb{S}^d : y^\top X' y \geq 0 \}$ such that $\mathcal{COP}^d \subset \mathcal{H}$ but $X \notin \mathcal{H}$. Note that this method was also used in Badenbroek and De Klerk \cite{badenbroek2019simulated}.

As noted in Section \ref{sec:Introduction}, there are alternative methods to test matrix copositivity. Gaddum's method \cite{gaddum1958linear} is already outperformed by the above method for the $6 \times 6$ matrices in our test set, and our MILP method scales considerably better. The method by Nie et al. \cite{nie2018complete} can also become too slow for our purposes at moderate matrix dimensions. Anstreicher's recent method \cite{anstreicher2020testing} also solves an MILP, which we expect to perform similar to \eqref{eq:CopositiveMembershipMILP}.

In theory, we can therefore determine if a matrix is copositive by solving one MILP. In practice however, a solver may return a solution $(\hat{y}, \hat{z}, \hat{\mu}, \hat{\nu})$ to \eqref{eq:CopositiveMembershipMILP} where $\hat{y}^\top \hat{\nu} > 0$, violating the complementarity condition. This is caused by numerical tolerances allowing a solution with $\hat{z} \notin \{ 0,1 \}^d$, which mostly seems to occur if $X$ has low rank (or is close to a low rank matrix). To find the optimal solution if this occurs, we fix $z$ to the element-wise rounded value $\Call{Round}{\hat{z}}$ of $\hat{z}$. If the resulting problem  is still feasible, we can compare its complementary solution with the solution to the model for $z \in \{0,1\}^d \setminus \{\Call{Round}{\hat{z}}\}$. If the constraint $z = \Call{Round}{\hat{z}}$ does make the problem infeasible, we know that any optimal solution will have $z \in \{0,1\}^d \setminus \{\Call{Round}{\hat{z}}\}$. The details of this procedure are given in Algorithm \ref{alg:TestCopositive}, where $\val(\mathcal{M})$ denotes the objective value of the optimal solution returned by the solver when solving the model $\mathcal{M}$.

\begin{algorithm}[!ht]
	\begin{algorithmic}[1]
		\Input Matrix $X \in \mathbb{S}^d$ which we want to test for copositivity.
		
		\Function{TestCopositive}{$X$}
			\State Let $\mathcal{M}$ refer to the model \eqref{eq:CopositiveMembershipMILP} with input $X$
			\State $(\hat{y}, \hat{z}, \hat{\mu}, \hat{\nu}) \gets \Call{SolveModel}{\mathcal{M}}$ \Comment{See Line \ref{line:SolveModel}}
			\If{$\hat{y}^\top X \hat{y} \geq 0$}
				\State \Return true \Comment{Returns true if $X$ is copositive}
			\Else \Comment{Returns a deep cut if $X$ is not copositive:}
				\State \Return $\{ \widehat{X} \in \mathbb{S}^d : \hat{y}^\top \widehat{X} \hat{y} \geq 0 \}$ \Comment{A halfspace $\mathcal{H}$ such that $\mathcal{COP}^d \subseteq \mathcal{H}$ but $X \notin \mathcal{H}$}
			\EndIf
		\EndFunction
		\Function{SolveModel}{$\mathcal{M}$, $u = +\infty$}\label{line:SolveModel}
			\State Let $(\hat{y}, \hat{z}, \hat{\mu}, \hat{\nu})$ be the solution to the model $\mathcal{M}$ returned by the solver
			\If{$\hat{y}^\top \hat{\nu} > 0$ and $\val(\mathcal{M}) < u$}
				\State Let $\overline{\mathcal{M}}$ be the model $\mathcal{M}$ with the added constraint $z = \Call{Round}{\hat{z}}$
				\State Let $\mathcal{M}'$ be the model $\mathcal{M}$ with the added constraint $\sum_{i: \Call{Round}{\hat{z}_i} = 0} z_i + \sum_{i: \Call{Round}{\hat{z}_i} = 1} (1 - z_i) \geq 1$
				\If{$\overline{\mathcal{M}}$ is feasible}
					\State Compute the optimal solution to $\overline{\mathcal{M}}$, and $\Call{SolveModel}{\mathcal{M}', \val(\overline{\mathcal{M}})}$ if $\mathcal{M}'$ is feasible
					\State \Return the solution with the best objective value out of these two
				\Else
					\State \Return $\Call{SolveModel}{\mathcal{M}'}$
				\EndIf
			\Else
				\State \Return the solution $(\hat{y}, \hat{z}, \hat{\mu}, \hat{\nu})$
			\EndIf
		\EndFunction
	\end{algorithmic}
	\caption{Method for testing copositivity or finding deep cuts}
	\label{alg:TestCopositive}
\end{algorithm}

\subsection{Approximating the Analytic Center}
\label{subsec:AnalyticCenter}
Now that we saw how to generate cuts for the copositive cone, we turn our attention to the second question: how to approximate the analytic center of our outer approximation.
For the sake of concreteness, let us suppose the convex body $\mathcal{Q}$ for which we want to approximate the analytic center is the intersection of a ball and a polyhedron, i.e.
\begin{equation}
	\mathcal{Q} = \left\{ x \in \mathbb{R}^n : \| x \|^2 \leq \conr^2, \enspace \cona_i^\top x \leq \conb_i \,\forall i = 1, ..., m \right\},
	\label{eq:FormConvexBody}
\end{equation}
where $\cona_1^\top, ..., \cona_m^\top$ are the rows of a matrix $\conA$, and $\conb \coloneqq (\conb_1, ..., \conb_m)$.
The analytic center of $\mathcal{Q}$ is the optimal solution $x$ to the problem
\begin{equation}
	\label{eq:ACFeasibleForm}
	\inf_{x} \left\{ -\log(\conr^2 - \| x \|^2 ) - \sum_{i=1}^{m} \log(\conb_i- \cona_i^\top x) \right\}.
\end{equation}
It is well known that self-concordant barrier functions only have an analytic center when their domain is bounded (see e.g. Renegar \cite[Corollary 2.3.6]{renegar2001mathematical}). This is why we use the upper bound $\conr$ on $\| x \|$. Of course, a more traditional solution would be to ensure that the linear constraints $\conA x \leq \conb$ describe a bounded set. We decided against this for reasons of numerical stability (more details in Section \ref{subsec:Complexity}).

Since the objective function in \eqref{eq:ACFeasibleForm} can only be evaluated at $x$ where $\| x \|^2 < \conr^2$ and $\conA x < \conb$, we will use an infeasible-start Newton method to solve \eqref{eq:ACFeasibleForm}.
Similar to Boyd et al. \cite[Section 2]{boyd2011accm}, one can reformulate the problem of computing the analytic center of $\mathcal{Q}$ as
\begin{equation}
	\label{eq:ACInfeasibleForm}
	\inf_{x,\slackr,\slackA} \left\{ -\log(\slackr) - \sum_{i=1}^{m} \log(\slackA_i): \slackr \leq \conr^2 - \| x \|^2, \slackA \leq \conb - \conA x \right\},
\end{equation}
which has Lagrangian
\begin{equation*}
	L(x,\slackr, \slackA, \mulr, \mulA) = -\log(\slackr) - \sum_{i=1}^{m} \log(\slackA_i) + \mulr( \slackr - \conr^2 + \| x \|^2 ) + \mulA^\top (\slackA - \conb + \conA x),
\end{equation*}
with gradient
\begin{equation}
	\nabla L(x,\slackr, \slackA, \mulr, \mulA) = \begin{bmatrix}
		2 \mulr x + \conA^\top \mulA\\
		-\slackr^{-1} + \mulr\\
		-\slackA^{-1} + \mulA\\
		\slackr - \conr^2 + \| x \|^2\\ 
		\slackA - \conb + \conA x
	\end{bmatrix}.
	\label{eq:LagrangianGradient}
\end{equation}
For the sake of completeness, let us show that it suffices to compute a stationary point of the Lagrangian.
\begin{proposition}
	Let $\conA \in \mathbb{R}^{m \times n}$ have rows $\cona_1^\top, ..., \cona_m^\top$, and let $\conb \in \mathbb{R}^m$, and $\conr > 0$.
	Let $\mathcal{Q}$ be as defined in \eqref{eq:FormConvexBody}, and assume that it has nonempty interior. Then, $x_*$ is the analytic center of $\mathcal{Q}$ if and only if there exist $\slackr_*, \slackA_*, \mulr_*, \mulA_* > 0$ such that $\nabla L(x_*,\slackr_*, \slackA_*, \mulr_*, \mulA_*) = 0$.
\end{proposition}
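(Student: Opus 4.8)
The plan is to reduce the condition $\nabla L = 0$ to the first-order optimality condition for the barrier problem \eqref{eq:ACFeasibleForm} and then invoke convexity. Write
\begin{equation*}
	\phi(x) \coloneqq -\log(\conr^2 - \| x \|^2) - \sum_{i=1}^m \log(\conb_i - \cona_i^\top x)
\end{equation*}
for the barrier whose minimizer defines the analytic center. First I would record two facts about $\phi$. Its domain is exactly $\interior \mathcal{Q} = \{ x : \| x \|^2 < \conr^2, \, \conA x < \conb \}$, and $\phi$ is convex there: each term $-\log(\conb_i - \cona_i^\top x)$ is the composition of the convex function $-\log$ with an affine map, while $-\log(\conr^2 - \| x \|^2)$ is the composition of the convex decreasing function $-\log$ with the concave map $x \mapsto \conr^2 - \| x \|^2$. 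A direct computation gives
\begin{equation*}
	\nabla \phi(x) = \frac{2 x}{\conr^2 - \| x \|^2} + \sum_{i=1}^m \frac{\cona_i}{\conb_i - \cona_i^\top x}.
\end{equation*}
Since $\phi$ is convex on the open set $\interior \mathcal{Q}$ and blows up on the boundary, $x_*$ is the analytic center if and only if $x_* \in \interior \mathcal{Q}$ and $\nabla \phi(x_*) = 0$; the rest of the proof will match this characterization against $\nabla L = 0$.

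For the forward direction, suppose $x_*$ is the analytic center, so $x_* \in \interior \mathcal{Q}$. I would set $\slackr_* \coloneqq \conr^2 - \| x_* \|^2 > 0$, $\slackA_* \coloneqq \conb - \conA x_* > 0$, and then $\mulr_* \coloneqq \slackr_*^{-1} > 0$ and $\mulA_* \coloneqq \slackA_*^{-1} > 0$ (componentwise). By construction the second through fifth blocks of the gradient \eqref{eq:LagrangianGradient} vanish. The first block equals $2 \mulr_* x_* + \conA^\top \mulA_* = 2 x_* / \slackr_* + \sum_i \cona_i / \slackA_{*,i}$, which, after substituting the definitions of $\slackr_*$ and $\slackA_*$, is precisely $\nabla \phi(x_*) = 0$.

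For the reverse direction, suppose $\slackr_*, \slackA_*, \mulr_*, \mulA_* > 0$ satisfy $\nabla L(x_*, \slackr_*, \slackA_*, \mulr_*, \mulA_*) = 0$. The fourth and fifth blocks force $\slackr_* = \conr^2 - \| x_* \|^2$ and $\slackA_* = \conb - \conA x_*$; since these are positive, $x_* \in \interior \mathcal{Q}$. The second and third blocks give $\mulr_* = \slackr_*^{-1}$ and $\mulA_* = \slackA_*^{-1}$, and substituting everything into the vanishing first block reproduces $\nabla \phi(x_*) = 0$. Convexity of $\phi$ then upgrades this interior stationary point to a global minimizer, so $x_*$ is the analytic center.

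I expect no serious obstacle: the argument is a KKT/convexity bookkeeping exercise once the slack and multiplier variables are eliminated. The only points deserving care are the convexity of the term $-\log(\conr^2 - \| x \|^2)$, which underlies both the characterization via $\nabla \phi = 0$ and the reverse direction, and the observation that the $(\mulr, \mulA)$-blocks of $\nabla L$ encode that both constraints are active — so that, together with strict positivity of the multipliers, complementary slackness holds automatically.
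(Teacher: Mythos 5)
Your proof is correct and follows essentially the same route as the paper's: both reduce the claim to first-order optimality conditions plus convexity, with the slack variables forced equal to the constraint slacks and the multipliers equal to their reciprocals. The only cosmetic difference is that you eliminate the auxiliary variables explicitly against the stationarity condition $\nabla\Phi(x_*)=0$ for the unlifted barrier, whereas the paper cites the KKT conditions of the lifted problem \eqref{eq:ACInfeasibleForm} and observes that positivity of the multipliers forces the constraints to be active.
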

\begin{proof}
	Because $\mathcal{Q}$ is nonempty and bounded, it has an analytic center, and problem \eqref{eq:ACInfeasibleForm} has an optimal solution.
	Since \eqref{eq:ACInfeasibleForm} is convex, a feasible solution $(x_*,\slackr_*,\slackA_*)$ is optimal if and only if it satisfies the KKT conditions: there should exist $\mulr_*, \mulA_*$ such that
	\begin{equation*}
		\begin{cases}
			\begin{bmatrix}
				2 \mulr_* x_* + \conA^\top \mulA_*\\
				-\slackr_*^{-1} + \mulr_*\\
				-\slackA_*^{-1} + \mulA_*
			\end{bmatrix} = 0\\ 
			\mulr_*( \slackr_* - \conr^2 + \| x_* \|^2 ) = 0\\ 
			\mulA_*^\top (\slackA_* - \conb + \conA x_*) = 0\\
			\slackr_* \leq \conr^2 - \| x_* \|^2\\ 
			\slackA_* \leq \conb - \conA x_*\\
			\mulr_*, \mulA_* \geq 0.
		\end{cases}
	\end{equation*}
	Since $\mulr_* = \slackr_*^{-1} > 0$ and $\mulA_* = \slackA_*^{-1} > 0$, the claim follows.
\end{proof}


The first order approximation for the Lagrangian shows
\begin{equation}
	\nabla L(x+\Delta x,\slackr + \Delta\slackr, \slackA+\Delta\slackA, \mulr+\Delta\mulr, \mulA+\Delta\mulA)
	\approx \nabla L(x,\slackr, \slackA, \mulr, \mulA) + \nabla^2 L(x,\slackr, \slackA, \mulr, \mulA) \begin{bmatrix} \Delta x\\ \Delta \slackr\\ \Delta \slackA\\ \Delta \mulr\\ \Delta \mulA \end{bmatrix}, 
	\label{eq:LagrangianFirstOrderApprox}
\end{equation}
which means we can solve a linear system to find the Newton step $(\Delta x, \Delta \slackr, \Delta \slackA, \Delta \mulr, \Delta \mulA)$ with which we can approximate a stationary point of $L$.
Next, we find that
\begin{equation}
	\nabla^2 L(x,\slackr, \slackA, \mulr, \mulA) = \begin{bmatrix}
		2 \mulr I & 0 & 0 & 2 x & \conA^\top\\
		0 & \slackr^{-2} & 0 & 1 & 0\\
		0 & 0 & \Diag(\slackA^{-2}) & 0 & I\\
		2 x^\top & 1 & 0 & 0 & 0\\
		\conA & 0 & I & 0 & 0
	\end{bmatrix}.
	\label{eq:LagragianHessian}
\end{equation}

Thus, if we substitute the expressions \eqref{eq:LagrangianGradient} and \eqref{eq:LagragianHessian} in \eqref{eq:LagrangianFirstOrderApprox}, we see that the Newton step should satisfy
\begin{equation}
	\label{eq:NewtonSystem}
	0 = \begin{bmatrix}
		2 \mulr x + \conA^\top \mulA\\
		-\slackr^{-1} + \mulr\\
		-\slackA^{-1} + \mulA\\
		\slackr - \conr^2 + \| x \|^2\\ 
		\slackA - \conb + \conA x
	\end{bmatrix} + 
	\begin{bmatrix}
		2 \mulr I & 0 & 0 & 2 x & \conA^\top\\
		0 & \slackr^{-2} & 0 & 1 & 0\\
		0 & 0 & \Diag(\slackA^{-2}) & 0 & I\\
		2 x^\top & 1 & 0 & 0 & 0\\
		\conA & 0 & I & 0 & 0
	\end{bmatrix}
	\begin{bmatrix} \Delta x\\ \Delta \slackr\\ \Delta \slackA\\ \Delta \mulr\\ \Delta \mulA \end{bmatrix}.
\end{equation}
We could solve this system directly, but it is more efficient to note that the last conditions imply
\begin{equation}
	\begin{cases}
		\Delta \mulr &= - \mulr + \slackr^{-1} - \slackr^{-2} \Delta \slackr\\
		\Delta \mulA &= - \mulA  + \slackA^{-1} - \Diag(\slackA^{-2}) \Delta \slackA\\
		\Delta \slackr &= -\slackr + \conr^2 - \| x \|^2 - 2 x^\top \Delta x\\
		\Delta \slackA &= -\slackA + \conb - \conA x - \conA \Delta x
	\end{cases}
	\label{eq:NewtonConditions}
\end{equation}
which means the entire Newton step can be expressed in terms of $\Delta x$. The first $n$ equations of the Newton system \eqref{eq:NewtonSystem} are thus
\begin{align*}
	-2 \mulr x - \conA^\top \mulA
	&= 2 \mulr \Delta x + 2 x \Delta \mulr + \conA^\top \Delta \mulA\\
	&= 2 \mulr \Delta x + 2 x [\slackr^{-1} - \mulr - \slackr^{-2} \Delta \slackr] + \conA^\top [\slackA^{-1} - \mulA - \Diag(\slackA^{-2}) \Delta \slackA]\\
	&= 2 \mulr \Delta x + 2 x [\slackr^{-1} - \mulr - \slackr^{-2} (-\slackr + \conr^2 - \| x \|^2 - 2 x^\top \Delta x)] \\
	&\qquad + \conA^\top [\slackA^{-1} - \mulA - \Diag(\slackA^{-2}) (-\slackA + \conb - \conA x - \conA \Delta x)]
\end{align*}
or equivalently,
\begin{equation}
	\label{eq:NewtonSystemDeltaX}
	\left[ 2 \mulr I + \frac{4}{\slackr^2} x x^\top + \conA^\top \Diag(\slackA^{-2}) \conA \right] \Delta x
	= \frac{\conr^2 - \| x \|^2 - 2\slackr}{\slackr^2} 2 x
	+ \conA^\top \Diag(\slackA^{-2}) (\conb- \conA x -2 \slackA).
\end{equation}
After solving this system for $\Delta x$, we can compute the other components of the Newton step through equations \eqref{eq:NewtonConditions}. 
Now that it is clear how one can compute the Newton step for problem \eqref{eq:ACInfeasibleForm}, we propose Algorithm \ref{alg:InfeasibleNewton} to solve \eqref{eq:ACInfeasibleForm}.

\begin{algorithm}[!ht]
	\begin{algorithmic}[1]
		\Input Convex body $\mathcal{Q} = \{ x \in \mathbb{R}^n : \| x \|^2 \leq \conr^2, \conA x \leq \conb \}$, where $\conA \in \mathbb{R}^{m \times n}$ and $\conb \in \mathbb{R}^m$; starting point $x_0 \in \mathbb{R}^n$; maximum number of iterations $k_{\text{max}} = 50$; gradient norm tolerance $\delta = 10^{-8}$.
		
		\Function{AnalyticCenter}{$\mathcal{Q}, x_0$}
		\State $k \gets 1$
		\State $\slackr_0 \gets \begin{cases}
			\conr^2 - \| x_0 \|^2 & \text{if } \conr^2 - \| x \|^2 > 0\\ 1 & \text{otherwise}
		\end{cases}$
		\State $(\slackA_0)_i \gets \begin{cases}
			\conb_i - \cona_i^\top x & \text{if } \conb_i - \cona_i^\top x > 0\\ 1 & \text{otherwise}
		\end{cases}$ \quad for all $i = 1, ..., m$
		\State $\mulr_0 \gets -1$
		\State $\mulA_0 \gets 0$
		\While{$k \leq k_{\text{max}}$}
			\State Compute $\Delta x_k$ from \eqref{eq:NewtonSystemDeltaX}
			\State Compute $(\Delta \slackr_k, \Delta \slackA_k, \Delta \mulr_k, \Delta \mulA_k)$ from \eqref{eq:NewtonConditions}
			\State $t_k \gets \min\{ 1, 0.9 \times \sup\{ t \geq 0 : \slackr_k + t \Delta \slackr_k \geq 0, \slackA_k + t \Delta \slackA_k \geq 0, \mulr_k + t \Delta \mulr_k \geq 0 \}  \}$ \label{line:ComputeT}
			\State $g_k(t) := \| \nabla L(x_k+t\Delta x_k,\slackr_k + t\Delta\slackr_k, \slackA_k+t\Delta\slackA_k, \mulr_k+t\Delta\mulr_k, \mulA_k+t\Delta\mulA_k) \|$
			\If{$g_k(0) \leq \delta$ and $\mulA_k \geq 0$ and ($g_k(t_k) \geq g_k(0)$ or $k = k_{\text{max}}$)}
				\State \Return $x_k$ with success status
			\EndIf
			\State $k \gets k+1$
		\EndWhile
		\State \Return $x_k$ with failure status
		\EndFunction
	\end{algorithmic}
	\caption{Infeasible start Newton method for \eqref{eq:ACInfeasibleForm}}
	\label{alg:InfeasibleNewton}
\end{algorithm}

Let us make a few observations about this algorithm.
First, note that if $\mulr > 0$, the matrix $ 2 \mulr I + 4\slackr^{-2} x x^\top + \conA^\top \Diag(\slackA^{-2}) \conA$ is positive definite, and hence invertible. Thus, as long as $\mulr > 0$, the system \eqref{eq:NewtonSystemDeltaX} will have a (unique) solution $\Delta x$.

Second, the value for $t$ in Line \ref{line:ComputeT} of Algorithm \ref{alg:InfeasibleNewton} is chosen such that after the update, $\slackr$, $\slackA$, and $\mulr$ will all remain positive. In principle, the value $0.9$ could be replaced by any real number from $(0,1)$. Note that we are not requiring that $\mulA$ remains positive in all iterations: numerical evidence suggests that the method is more likely to succeed if some elements of $\mulA$ are allowed to be negative in some iterations. Nevertheless, Algorithm \ref{alg:InfeasibleNewton} only returns a success status if the final $\mulA$ is nonnegative.

Third, the algorithm returns the current solution $x$ with success status in two cases. In either case, the current solution should approximately be a stationary point of the Lagrangian, i.e. the norm of $\nabla L(x,\slackr, \slackA, \mulr, \mulA)$ has to be small, and we should have $\mulA \geq 0$. Moreover, one of the following conditions should hold:
\begin{enumerate}
	\item Updating the point by adding $t$ times the Newton step leads to a larger norm of the Lagrangian gradient. In this case, taking the step does not improve the solution. Since the current point is already approximately a stationary point, this solution is returned;
	\item We are in iteration $k_{\text{max}}$. Since the current point is approximately a stationary point, this solution is returned.
\end{enumerate}
The reason to continue taking Newton steps even if the norm of the Lagrangian's gradient is small is that Newton's method converges very rapidly when the current point is near the optimum. By running just a few more iterations, we get a solution with much higher accuracy.

Finally, compared to the algorithm in Boyd et al. \cite[Section 2]{boyd2011accm}, Algorithm \ref{alg:InfeasibleNewton} does not use backtracking line search. The reason is that for problem \eqref{eq:ACInfeasibleForm}, the norm of the Lagrangian gradient does not seem to decrease monotonically during the algorithm's run. In fact, the norm of this gradient usually first decreases to the order $10^0$, then increases slightly to the order $10^1$, before decreasing rapidly to the order $10^{-8}$. If one does backtracking line search on $t$ to ensure that in every iteration the norm of the gradient decreases, the values of $t$ can become very small (say, of the order $10^{-9}$). Then, the number of iterations required to achieve convergence would be impractically large.

\subsection{Pruning Constraints}
\label{subsec:Pruning}
The next question we should answer is how we can prune constraints from our outer approximation \eqref{eq:FormConvexBody}. Pruning is often used to reduce the number of constraints defining the outer approximation, which means keeps the computational effort per iteration stable. Moreover, the linear system \eqref{eq:NewtonSystemDeltaX} will quickly become ill-conditioned if no constraints are dropped.

The idea we use is the same as in Boyd, Vandenberghe, and Skaf \cite[Section 3]{boyd2011accm}: denote the barrier of which we compute the analytic center by
\begin{equation}
	\label{eq:AnalyticCenterDefBarrier}
	\Phi(x) \coloneqq - \log(\conr^2 - \| x \|^2) - \sum_{i=1}^m \log(\conb_i - \cona_i^\top x).
\end{equation}
Since $\Phi$ is self-concordant, the Dikin ellipsoid around the analytic center of $\Phi$ is contained in $\mathcal{Q}$, i.e.
\begin{equation}
	\label{eq:DikinInnerApproximation}
	\{ x \in \mathbb{R}^n : (x - x_*)^\top \nabla^2 \Phi(x_*) (x-x_*) \leq 1 \} \subseteq
	\mathcal{Q},
\end{equation}
where $x_*$ is the minimizer of $\Phi$ and 
\begin{equation*}
	\nabla^2 \Phi(x_*) = \frac{2}{\conr^2 - \| x_* \|^2} I + \frac{4 }{(\conr^2 - \| x_* \|^2)^2} x_* x_*^\top + \sum_{i=1}^m \frac{1}{(\conb_i - \cona_i^\top x_*)^2} \cona_i \cona_i^\top.
\end{equation*}
Moreover, it will be shown at the end of this section that for our outer approximation $\mathcal{Q}$ it holds that
\begin{equation}
	\label{eq:DikinOuterApproximation}
	\mathcal{Q} \subseteq \{ x \in \mathbb{R}^n : (x -x_*)^\top \nabla^2 \Phi(x_*) (x-x_*) \leq (m+1)^2 \}.
\end{equation}
Hence, following \cite{boyd2011accm}, we define the relevance measure
\begin{equation}
	\label{eq:DefRelevanceMeasure}
	\eta_i \coloneqq \frac{\conb_i - \cona_i^\top x_*}{\sqrt{\cona_i^\top \nabla^2 \Phi(x_*)^{-1} \cona_i}},
\end{equation}
for all linear constraints $i = 1, ..., m$. By \eqref{eq:DikinInnerApproximation}, all $\eta_i$ are at least one. Moreover, it follows from \eqref{eq:DikinOuterApproximation} that if $\eta_i \geq m+1$, the corresponding constraint is certainly redundant.

With this in mind, we propose Algorithm \ref{alg:Pruning} to prune constraints from $\mathcal{Q}$. Note that the ball constraint $\| x \|^2 \leq r^2$ is never pruned.

\begin{algorithm}[!ht]
	\begin{algorithmic}[1]
		\Input Convex body $\mathcal{Q} = \{ x \in \mathbb{R}^n : \| x \|^2 \leq \conr^2, \conA x \leq \conb \}$, where $\conA \in \mathbb{R}^{m \times n}$ and $\conb \in \mathbb{R}^m$; analytic center $x_*$ of $\mathcal{Q}$; maximum number of linear inequalities $m_{\text{max}} = 3n$.
		
		\Function{Prune}{$\mathcal{Q}, x_*$}
			\If{$m > n$}
				\State Compute $\eta_i$ as in \eqref{eq:DefRelevanceMeasure} for $i = 1, ..., m$
				\State Remove all constraints $\cona_i^\top x \leq \conb_i$ with $\eta_i \geq m+1$ from $\mathcal{Q}$
				\If{$\mathcal{Q}$ still contains more than $m_{\text{max}}$ linear inequalities}
					\State Remove the constraints $\cona_i^\top x \leq \conb_i$ with the largest values of $\eta_i$ from $\mathcal{Q}$ such that $m_{\text{max}}$ remain
				\EndIf
			\EndIf
			\State \Return $\mathcal{Q}$
		\EndFunction
	\end{algorithmic}
	\caption{A pruning method for the intersection of a ball and a polyhedron}
	\label{alg:Pruning}
\end{algorithm}
As an alternative, one might consider dropping $m - m_{\text{max}}$ constraints, possibly keeping some redundant constraints. The reason we do not adopt this strategy is that we noticed Algorithm \ref{alg:Pruning} leads to slightly better numerical performance on our test sets.

We finish this section with a proof of the relation \eqref{eq:DikinOuterApproximation}.

\begin{proposition}
	\label{prop:AnalyticCenterParameter}
	Let $\conA \in \mathbb{R}^{m \times n}$ have rows $\cona_1^\top, ..., \cona_m^\top$, and let $\conb \in \mathbb{R}^m$, and $\conr > 0$. Let $\mathcal{Q}$ be as defined in \eqref{eq:FormConvexBody}, and assume that it has nonempty interior. Define $\Phi$ as in \eqref{eq:AnalyticCenterDefBarrier}, and let $x_*$ be the minimizer of $\Phi$. Then, for any $x \in \dom \Phi$, we have
	\begin{equation*}
		(x-x_*)^\top \nabla^2 \Phi(x_*) (x-x_*) \leq (m+1)^2.
	\end{equation*}
\end{proposition}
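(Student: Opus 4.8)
The plan is to compute the quadratic form explicitly and exploit the first-order optimality of the analytic center. Abbreviate the slacks at the center by $\sigma_0 := r^2 - \|x_*\|^2$ and $\sigma_i := b_i - \cona_i^\top x_*$ for $i = 1,\dots,m$; all are positive since $x_* \in \dom\Phi$. Setting $g_0 := 2x_*/\sigma_0$ and $g_i := \cona_i/\sigma_i$, the stationarity condition $\nabla\Phi(x_*) = 0$ reads $\sum_{j=0}^m g_j = 0$, while the Hessian formula recorded just before the statement becomes $\nabla^2\Phi(x_*) = \frac{2}{\sigma_0}I + \sum_{j=0}^m g_j g_j^\top$. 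Hence, writing $h := x - x_*$, the quantity to be bounded collapses to the clean sum
\begin{equation*}
(x-x_*)^\top \nabla^2\Phi(x_*)(x-x_*) = \frac{2\|h\|^2}{\sigma_0} + \sum_{j=0}^m (g_j^\top h)^2 .
\end{equation*}

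First I would control each directional term $g_j^\top h$ using feasibility of $x$. For a linear constraint, $g_i^\top h = 1 - (b_i - \cona_i^\top x)/\sigma_i \le 1$, because $x \in \mathcal{Q}$ keeps the slack $b_i - \cona_i^\top x$ nonnegative. For the ball term, nonnegativity of $r^2 - \|x\|^2$ rearranges to $\|h\|^2 \le \sigma_0(1 - g_0^\top h)$, so that $\frac{2\|h\|^2}{\sigma_0} \le 2(1 - g_0^\top h)$. The ingredient that couples these $m+1$ one-sided bounds is the stationarity condition dotted with $h$, namely $g_0^\top h + \sum_{i=1}^m g_i^\top h = 0$, which lets me eliminate $g_0^\top h$ in favour of $P := \sum_{i=1}^m g_i^\top h$.

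Substituting, the estimate reduces to a finite-dimensional inequality in the scalars $p_i := g_i^\top h \le 1$: after collecting terms one is left to bound an expression of the shape $(1+P)^2 + \sum_i p_i^2 + \text{const}$ over the feasible $p_i$, and the target $(m+1)^2$ is exactly what the extreme configuration $p_i = 1$ suggests. Conceptually this is the outer Dikin-ellipsoid inclusion for the minimizer of the self-concordant barrier $\Phi$, whose parameter is $\nu = m+1$ (each halfspace log-term and the ball log-term contributes $1$), and $(m+1)^2 = \nu^2$ is the square of that parameter. I would therefore either invoke the standard estimate controlling $\|x-x_*\|$ in the local $\nabla^2\Phi(x_*)$-norm by the barrier parameter $m+1$, or carry out the maximization of the reduced quadratic directly.

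The step I expect to be the main obstacle is pinning down the sharp constant. The slack bounds $g_i^\top h \le 1$ are only one-sided, so on their own they do not prevent some $p_i$ from being very negative and inflating $\sum_i p_i^2$; the genuine control that a point lying deep inside one constraint must sit near the boundary of the others is geometric, and is precisely what the self-concordance of $\Phi$ encodes rather than any single slack inequality. I would first attempt the barrier route, since the parameter bookkeeping is transparent; if the exact constant proves delicate, I would fall back on maximizing the reduced quadratic over the region cut out by \emph{all} the slack-nonnegativity conditions simultaneously, rather than one constraint at a time.
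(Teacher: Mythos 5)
Your reduction is correct, and it is a genuinely different, more elementary route than the paper's: the paper lifts the problem into the symmetric cone (second-order cone) $\times$ $\mathbb{R}^m_+$, verifies that the gradient of the self-scaled barrier $f$ at the lifted analytic center is orthogonal to the lifted displacement, and invokes Renegar's Theorem 3.5.9 with a barrier parameter it bounds by $m+1$. Your identities $\nabla^2\Phi(x_*)=\tfrac{2}{\sigma_0}I+\sum_{j=0}^m g_jg_j^\top$, $\sum_{j=0}^m g_j=0$, $p_i\le 1$, and $2\|h\|^2/\sigma_0\le 2(1-p_0)$ (which also forces $p_0\le 1$) are all right, and the maximization you defer can actually be finished: $2(1-p_0)+\sum_{j=0}^m p_j^2$ over the bounded polytope $\{p\in\mathbb{R}^{m+1}: p_j\le 1\ \forall j,\ \sum_{j=0}^m p_j=0\}$ is convex, hence maximized at a vertex (one coordinate equal to $-m$, the rest equal to $1$); the maximum, attained at $p_0=-m$, is $2(m+1)+m^2+m=(m+1)(m+2)$. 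So your route, completed, proves the bound $(m+1)(m+2)$ --- \emph{not} $(m+1)^2$. Your other fallback does not close the gap either: the generic outer-Dikin-ellipsoid estimate for a $\nu$-self-concordant barrier has radius $\nu+2\sqrt{\nu}$, not $\nu$; radius $\nu$ requires exactly the self-scaled structure that the paper's conic lifting supplies.

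The reason you cannot reach $(m+1)^2$ is that this constant is not attainable: your bound $(m+1)(m+2)$ is in fact tight. Take $n=2$, $m=1$, $r=1$, $a_1=(-1,0)^\top$, $b_1=1/4$, so $\mathcal{Q}=\{x:\|x\|\le 1,\ x_1\ge -1/4\}$. Then $x_*=(1/2,0)$, both slacks equal $3/4$, $\nabla^2\Phi(x_*)=\Diag(56/9,\,8/3)$, and at points of $\dom\Phi$ approaching the corner $x=(-1/4,\sqrt{15}/4)$ the quadratic form tends to $6=(m+1)(m+2)>4=(m+1)^2$. (The discrepancy in the paper's own argument lies in the barrier parameter of the lifted barrier: $-\log(t^2-\|\tilde{x}\|^2)$ is logarithmically homogeneous of degree $2$, so $f$ has parameter $m+2$ rather than $m+1$, and Renegar's theorem then yields $(m+2)^2$ --- which your elementary argument improves to $(m+1)(m+2)$.) Your approach is therefore the right one, but you should carry the vertex maximization through and state the conclusion with the constant $(m+1)(m+2)$; the statement as given cannot be proved. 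Note that the downstream use of the proposition (the pruning threshold $\eta_i\ge m+1$ in \eqref{eq:DefRelevanceMeasure}) would need the corresponding adjustment.
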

\begin{proof}
	Define the barrier function
	\begin{equation*}
		f(t, \tilde{x}, s) \coloneqq - \log(t^2 - \| \tilde{x} \|^2) - \sum_{i=1}^m \log(\slackA_i),
	\end{equation*}
	whose domain is a symmetric cone. The barrier parameter $\vartheta$ of $f$ satisfies $\vartheta \leq m+1$. Note that any $x \in \dom \Phi$ if and only if $(\conr,x, \conb-\conA x) \in \dom f$. We will first show that the gradient of $f$ at $(\conr,x_*, \conb-\conA x_*)$ is orthogonal to $ (\conr,x, \conb-\conA x) - (\conr,x_*, \conb-\conA x_*) = (0, x - x_*, \conA (x_*-x ) )$. The claim will then follow from a property of symmetric cones.
	
	The gradient of $f$ is
	\begin{equation*}
		\nabla f(t,\tilde{x},\slackA) \coloneqq \begin{bmatrix}
			-2 t / (t^2 - \| \tilde{x} \|^2)\\
			2 x / (t^2 - \| \tilde{x} \|^2)\\
			-\slackA^{-1}
		\end{bmatrix},
	\end{equation*}
	so it follows that
	\begin{equation}
		\label{eq:GradientHyperplaneInnerProduct}
		\nabla f(\conr, x_*, \conb - \conA x_*)^\top
		\begin{bmatrix}
			0 \\ x - x_* \\ \conA (x_*-x )
		\end{bmatrix}
		= \frac{2 x_*^\top (x - x_*)}{\conr^2 - \|x_* \|^2} - \sum_{i=1}^m \frac{ \cona_i^\top (x_*-x) }{\conb_i - \cona_i^\top x_*}.
	\end{equation}
	Because $x_*$ is the minimizer of the convex function $\Phi$, we have
	\begin{equation*}
		0 = \nabla\Phi(x_*) = \frac{2}{\conr^2 - \| x_* \|^2} x_* + \sum_{i=1}^m \frac{1}{\conb_i - \cona_i^\top x_*} \cona_i,
	\end{equation*}
	which implies that \eqref{eq:GradientHyperplaneInnerProduct} is zero.
	Therefore, Theorem 3.5.9 in Renegar \cite{renegar2001mathematical} shows that
	\begin{equation}
		\label{eq:AnalyticCenterParameterIntermediate}
		\begin{bmatrix}
			0 \\ x - x_* \\ \conA (x_*-x )
		\end{bmatrix}^\top \nabla^2 f(\conr, x_*, \conb - \conA x_*) 
		\begin{bmatrix}
			0 \\ x - x_* \\ \conA (x_*-x )
		\end{bmatrix} \leq \vartheta^2,
	\end{equation}
	where $\nabla^2 f$ is the Hessian of $f$, given by
	\begin{equation*}
		\nabla^2 f(t,\tilde{x},s) \coloneqq \frac{1}{(t^2 - \| \tilde{x} \|^2)^2} \begin{bmatrix}
			2t^2 + \| \tilde{x} \|^2  & -4 t \tilde{x}^\top & 0\\
			-4 t \tilde{x}^\top  & 2(t^2 - \| \tilde{x} \|^2) I + 4 \tilde{x} \tilde{x}^\top & 0 \\
			0 & 0 & (t^2 - \| \tilde{x} \|^2)^2 \Diag(s^{-2})
		\end{bmatrix}.
	\end{equation*}
	In other words, \eqref{eq:AnalyticCenterParameterIntermediate} is equivalent to
	\begin{equation*}
		( x - x_*)^\top \left[ \frac{2}{\conr^2 - \| x_* \|^2} I + \frac{4 }{(\conr^2 - \| x_* \|^2)^2} x_* x_*^\top \right] (x - x_*) + \sum_{i=1}^m \frac{(\cona_i^\top (x_* - x) )^2}{(\conb_i - \cona_i^\top x_*)^2} \leq \vartheta^2,
	\end{equation*}
	which proves the claim, since $\vartheta \leq m+1$.
\end{proof}

\subsection{Algorithm Description}
\label{subsec:Algorithm}
Now that we answered the major questions surrounding an ACCP method for checking complete positivity of a matrix, we more on to our final method. We start with a quite general analytic center cutting plane method, and then add a wrapper function that performs the complete positivity check. The reason for making this split is that it makes our code easy to extend when solving other copositive optimization problems for which a bound on the norm of an optimal solution is known. We state our proposed analytic center cutting plane method to solve \eqref{eq:ACCPProblem} in Algorithm \ref{alg:ACCP}.

\begin{algorithm}[!ht]
	\begin{algorithmic}[1]
		\Input Objective $c \in \mathbb{R}^n$; oracle function $\Call{Oracle}{}: \mathbb{R}^n \to \{ \text{true} \} \cup \{ \{ x \in \mathbb{R}^n : a^\top x \leq b \} : a \in \mathbb{R}^n, b \in \mathbb{R} \}$; radius $\conr > 0$; optimality tolerance $\epsilon = 10^{-6}$.
		\Function{ACCP}{$c$, \textsc{Oracle}, $r$}
			\State $\mathcal{Q}_1 \gets \{ x \in \mathbb{R}^n : \| x \|^2 \leq \conr^2 \}$
			\State $x_0 \gets 0$
			\State $k \gets 1$
			\While{the best feasible solution so far $x_*$ has $\Call{RelativeGap}{c, x_*, \mathcal{Q}_k} > \epsilon$} \Comment{See Line \ref{line:RelativeGap}}
				\State $x_k \gets \Call{AnalyticCenter}{\mathcal{Q}_k, x_{k-1}}$
				\If{\Call{AnalyticCenter}{} terminated with a failure status}
					\State Check if $x_k \in \interior \mathcal{Q}_k$. If not, throw an error.
				\Else
					\State $\mathcal{Q}_k \gets \Call{Prune}{\mathcal{Q}_k, x_k}$
				\EndIf
				\If{\Call{Oracle}{$x_k$} returns true}
					\State $\mathcal{Q}_{k+1} \gets \mathcal{Q}_k \cap \{ x \in \mathbb{R}^n : c^\top x / \| c \| \leq c^\top x_k / \| c \| \}$
				\Else \Comment{\Call{Oracle}{$x_k$} returns a halfspace}
					\State $\mathcal{H}_k = \{x \in \mathbb{R}^n : a^\top_k x \leq b_k\}$ is the halfspace returned by \Call{Oracle}{$x_k$}
					\State $\mathcal{Q}_{k+1} \gets \mathcal{Q}_k \cap \{x \in \mathbb{R}^n : a_k^\top x / \| a_k \| \leq b_k / \| a_k \|\}$
				\EndIf
				\State $k \gets k+1$
			\EndWhile
			\State \Return the best feasible solution found $x_*$
		\EndFunction
		\Function{RelativeGap}{$c, x_*, \mathcal{Q}$} \label{line:RelativeGap}
			\State $l \gets \min_x\{ c^\top x : x \in \mathcal{Q} \}$
			\State \Return $(c^\top x_* - l) / (1 + \min\{ |c^\top x_* |, |l| \})$
		\EndFunction
	\end{algorithmic}
	\caption{Analytic Center Cutting Plane method to solve \eqref{eq:ACCPProblem}}
	\label{alg:ACCP}
\end{algorithm}

We continue the algorithm even if we cannot find the analytic center to high accuracy. Late in the algorithm's run, the system \eqref{eq:NewtonSystemDeltaX} often becomes ill-conditioned. This is to be expected, since as Algorithm \ref{alg:ACCP} progresses, the outer approximation $\mathcal{Q}_k$ becomes smaller and smaller. The distance from the analytic center to the linear constraints also goes to zero, but not at the same pace for every constraint. We may arrive in a situation where $\conb_i - \cona_i^\top x_k$ is of the order $10^{-4}$ for some constraints $i$, and of the order $10^{-8}$ for other constraints. This causes a considerable spread in the eigenvalues of the matrix in \eqref{eq:NewtonSystemDeltaX}.

If the analytic center is not known to a decent accuracy, the pruning procedure in Algorithm \ref{alg:Pruning} may remove constraints that are actually very important to the definition of $\mathcal{Q}_k$. One could of course still run the pruning function using the inaccurate analytic center approximation. However, because the problems in the analytic center computation only occur late in the algorithm's run, pruning or not pruning with the inaccurate approximation does not seem to have a major impact on total runtime.

Algorithm \ref{alg:ACCP} is a (relatively) general analytic center cutting plane method. The problem \eqref{eq:CompletelyPositiveProblem} can be solved by calling Algorithm \ref{alg:ACCP} with the right parameters, as is done by Algorithm \ref{alg:CompletelyPositiveCut}.

\begin{algorithm}[!ht]
	\begin{algorithmic}[1]
		\Input $C \in \mathbb{S}^d$ for which we want to determine if $C \in \mathcal{CP}^d$ or not.
		\Function{CompletelyPositiveCut}{$C$}
			\State $c \gets \mat^*(C)$
			\State $\conr \gets 1$
			\State $\Call{Oracle}{x} \gets \Call{TestCopositive}{\mat(x)}$
			\State \Return $\mat(\Call{ACCP}{c, \textsc{Oracle}, \conr})$
		\EndFunction
	\end{algorithmic}
	\caption{A wrapper function to determine if a matrix is completely positive by solving \eqref{eq:CompletelyPositiveProblem}}
	\label{alg:CompletelyPositiveCut}
\end{algorithm}

\subsection{A Note on Complexity}
\label{subsec:Complexity}
Our aim in this paper is to propose an algorithm with good practical performance. This is why we placed emphasis on a robust copositivity check, constraint pruning, and efficient computation of the analytic center. However, such an algorithm does not lend itself well to a formal complexity analysis. For instance, to the best of the authors' knowledge, the only analysis in the literature of an analytic center cutting plane method with constraint pruning is due to Atkinson and Vaidya \cite{atkinson1995cutting}. Although the number of constraints in their algorithm is technically bounded by a polynomial of $n$, this bound is so large as to be uninteresting in practice.

The analysis that perhaps comes closest to covering our algorithm is the survey by Goffin and Vial \cite{goffin2002convex}, who find a polynomial number of iterations for an analytic center cutting plane method with deep cuts. Their method only uses linear constraints, and does not prune cuts. Moreover, the method of recovering a feasible solution after adding a deep cut is different from the infeasible start Newton method we use.

Nevertheless, we compared our method numerically to Goffin and Vial's, and found that our method exhibits somewhat better numerical performance on our test set. In particular, Goffin and Vial's method struggles earlier to approximate the analytic center. Whereas we could solve the problems in our test set up to a relative gap of $10^{-6}$, Goffin and Vial's method sometimes failed to recover a point in the feasible set when the relative gap was still of the order $10^{-5}$. The condition number of their linear systems had become very large at this point, explaining the inaccuracy. At this level of the relative gap, the condition number of the system \eqref{eq:NewtonSystemDeltaX} in our algorithm was somewhat lower.

In short, while our method is not covered by a formal complexity analysis, we do prefer it over other algorithms in the literature for numerical reasons.

\section{Numerical Experiments}
\label{sec:NumericalExperiments}

\subsection{Extremal Matrices of the $6 \times 6$ Doubly Nonnegative Cone}
\label{subsec:ExtremalDNNMatrices}
We test Algorithm \ref{alg:CompletelyPositiveCut} on extremal matrices from the doubly nonnegative cone. Ten of such $6 \times 6$ matrices were proposed in \cite[Appendix B]{badenbroek2019simulated}. We run Algorithm \ref{alg:CompletelyPositiveCut} on these matrices, and record the number of calls to $\Call{TestCopositive}{}$. For the sake of comparison, we also applied the ellipsoid method of Yudin and Nemirovski \cite{yudin1976informational}. The termination criterion for the Ellipsoid method is similar to that in Algorithm \ref{alg:ACCP}, i.e. the relative gap can be at most $10^{-6}$. The only difference is that in the case of the Ellipsoid method, the lower bound is computed through minimization over the current ellipsoid, not over some outer approximation $\mathcal{Q}$. 

The results are shown in Table \ref{tab:DNNBoundary}. We record the final objective value for all instances and both methods, as well as the number of calls to $\Call{TestCopositive}{}$. The reason to report this number of calls is that the oracle performs the theoretically intractable part of these methods: testing if a matrix is copositive. All other parts of the ellipsoid method or Algorithm \ref{alg:ACCP} complete in polynomial time for each oracle call. Hence, to get the best performance for larger matrices, one would like to minimize the number of oracle calls.

\begin{table}[h]
	\centering
	\begin{tabular}{lllll}
		\toprule
		& \multicolumn{2}{c}{Final objective value} & \multicolumn{2}{c}{$\Call{TestCopositive}{}$ calls}\\
		\cmidrule(lr){2-3} \cmidrule(lr){4-5}
		Name & Algorithm \ref{alg:ACCP} & Ellipsoid method & Algorithm \ref{alg:ACCP} & Ellipsoid method\\
  		\midrule
		\texttt{extremal\_rand\_1} & -0.28140 & -0.28139 & 169 & 8560\\
		\texttt{extremal\_rand\_2} & -0.72121 & -0.72123 & 166 & 8030\\
		\texttt{extremal\_rand\_3} & -0.73676 & -0.73676 & 163 & 8598\\
		\texttt{extremal\_rand\_4} & -0.54867 & -0.54867 & 164 & 7910\\
		\texttt{extremal\_rand\_5} & -0.92462 & -0.92460 & 177 & 8546\\
		\texttt{extremal\_rand\_6} & -1.42946 & -1.42946 & 168 & 8184\\
		\texttt{extremal\_rand\_7} & -1.67891 & -1.67889 & 168 & 9119\\
		\texttt{extremal\_rand\_8} & -1.24450 & -1.24450 & 165 & 8126\\
		\texttt{extremal\_rand\_9} & -1.04975 & -1.04974 & 176 & 8318\\
		\texttt{extremal\_rand\_10} & -0.68582 & -0.68583 & 167 & 7950\\
		\bottomrule
	\end{tabular}
	\caption{Objective values returned by Algorithm \ref{alg:CompletelyPositiveCut} and by the Ellipsoid method, applied to the matrices from \cite[Appendix B]{badenbroek2019simulated}.}
	\label{tab:DNNBoundary}
\end{table}

As can be seen from Table \ref{tab:DNNBoundary}, both methods manage to find deep cuts that separate the matrices from the completely positive cone. However, Algorithm \ref{alg:ACCP} does this with roughly 50 times fewer calls to the copositivity oracle.

\subsection{Matrices on the Boundary of the Doubly Nonnegative Cone in Higher Dimensions}
To investigate how the algorithm scales, we also generated test instances in higher dimensions. To the best of our knowledge, a complete characterization of the extremal rays of the $d \times d$ doubly nonnegative cone is unknown for $d > 6$. (See the corollary to Theorem 3.1, and Propositions 5.1 and 6.1 in Ycart \cite{ycart1982extremales} for the extremal matrices for $d \leq 6$.) Hence, we use a semidefinite programming heuristic to find doubly nonnegative matrices in these dimensions which are not completely positive.

The matrices used in Section \ref{subsec:ExtremalDNNMatrices} are $6 \times 6$ doubly nonnegative matrices $C$ with rank $3$ and the entries $C_{i,i+1} = 0$ for all $i \in \{1, ..., 5\}$. 
This pattern of zeros can of course be extended to higher dimensions, but the low rank criterion is not tractable in semidefinite programming. The standard trick to find a low-rank solution -- which we also adopt -- is to minimize the trace of the matrix variable, see e.g. Fazel, Hindi, and Boyd \cite{fazel2001rank} and the references therein. To create a $d \times d$ test instance, we thus run the procedure in Algorithm \ref{alg:GenerateTest}.

\begin{algorithm}[!ht]
	\begin{algorithmic}[1]
		\Input Dimension $d$ of a random matrix $C \in \mathbb{S}^d$ to generate.
		\State $R_0 \in \mathbb{R}^{d \times d}$ is a matrix whose elements are samples from a standard normal distribution
		\State $R \gets |R_0| + |R_0|^\top$, where $|R_0| = [|(R_0)_{ij}|]$ is the element-wise absolute value
		\State \label{line:SDP} Let $C^*$ be an (approximately) optimal solution to
		\begin{align*}
			\inf_C & \tr C + \tfrac{1}{2} d \| C - R \|\\
			\text{subject to } & C_{i,i+1} = 0 && \forall i \in \{1, ..., d-1\}\\
			& C \succeq 0, C \geq 0.
		\end{align*}
		\For{$j \in \{1, ..., 10\}$} \label{line:BeginFor}
			\State Set all eigenvalues of $C^*$ smaller than $10^{-6}$ to zero
			\State Set all elements of $C^*$ smaller than $10^{-4}$ to zero
		\EndFor \label{line:EndFor}
		\State \Return $C^* / \| C^* \|$
	\end{algorithmic}
	\caption{A heuristic procedure to generate random matrices on the boundary of the doubly nonnegative cone}
	\label{alg:GenerateTest}
\end{algorithm}

The objective in Line \ref{line:SDP} of Algorithm \ref{alg:GenerateTest} includes two terms: the term $\tr C$ to get a low-rank solution, and the term $\tfrac{1}{2} d \| C - R \|$ to get a solution close to our random matrix $R$. Without this last term, the optimal solution of the problem would be the zero matrix. The weight $\frac{1}{2} d$ was chosen because numerical experiments suggested this weight leads to solutions with low rank, but not rank zero, for the dimensions in our test set.
The solution $C^*$ computed in Line \ref{line:SDP} by interior point methods still lies in the interior of the doubly nonnegative cone. To project this solution to the boundary of the doubly nonnegative cone, we run the Lines \ref{line:BeginFor} to \ref{line:EndFor}.

For each $d \in \{6,7,8,9,10,15,20,25\}$, we generated ten test instances with Algorithm \ref{alg:GenerateTest}. Such an instance $C$ is only included in the final test set if Algorithm \ref{alg:CompletelyPositiveCut} returns an $X$ such that $\langle C, X \rangle < -0.01$, which was almost always the case. In those few cases where $\langle C, X \rangle \geq -0.01$, a new instance was generated. Hence, we end up with ten $d \times d$ doubly nonnegative matrices that are not completely positive, for each $d \in \{6,7,8,9,10,15,20,25\}$. These instances are available at \url{https://github.com/rileybadenbroek/CopositiveAnalyticCenter.jl/tree/master/test}.

Algorithm \ref{alg:CompletelyPositiveCut} is applied to each of these instances, and the total number of calls to $\Call{TestCopositive}{}$ is reported in Figure \ref{fig:OracleCalls}. (We do not report these results as in Table \ref{tab:DNNBoundary} since there are 80 instances, and running the ellipsoid method for all of them would take too much time.) As one can see, the number of oracle calls for one of our test instances with dimension $d$ is roughly $7 d^{5/3}$.

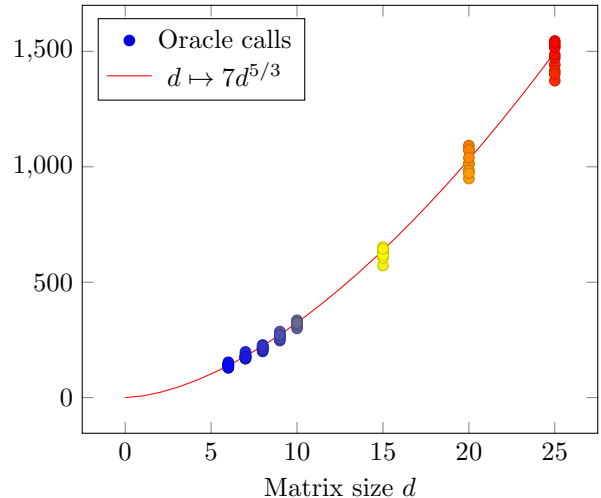
\begin{figure}
	\centering
	\begin{tikzpicture}
		\begin{axis}[xlabel={Matrix size $d$}, xmin=0, enlargelimits, ymin = 0, legend entries={Oracle calls, $d \mapsto 7 d^{5/3}$}, legend pos= north west]
			\addplot+[scatter,only marks,point meta=y] table[x=n, y=calls]
			{calls_accp_merged.txt};
			\addplot+[no marks,domain=0:25] {7*x^(5/3)};
		\end{axis}
	\end{tikzpicture}
	\caption{Number of oracle calls in Algorithm \ref{alg:CompletelyPositiveCut} for the $d \times d$ test instances generated with Algorithm \ref{alg:GenerateTest}}
	\label{fig:OracleCalls}
\end{figure}

\section{Conclusion}
We have proposed an analytic center cutting plane algorithm to separate a matrix from the completely positive cone. This algorithm solves an optimization problem over the copositive cone, where membership of the copositive cone is tested through a mixed-integer linear program.
We have emphasized stable numerical performance, which leads to an algorithm for which we do not have a formal complexity analysis. On the other hand, the numerical results are encouraging. 
In particular, the number of oracle calls to test matrix copositivity grows roughly like $O(d^2)$ for $d\times d$ matrices. Thus one can leverage the recent progress on testing matrix copositivity \cite{badenbroek2019simulated} or the similar method by Anstreicher \cite{anstreicher2020testing}. We have therefore made some computational progress on an open problem formulated by Berman, D\"ur, and Shaked-Monderer \cite{berman2015open}.
It is worthwhile to note that our algorithm can be applied to any copositive optimization problem, as long as an upper bound on the norm of the optimal solution is known. The code is available at \url{https://github.com/rileybadenbroek/CopositiveAnalyticCenter.jl}.


\end{document}